\journalname{JOTA}
\newcommand{\algn}[1]{{\sf
\scalefont{0.96}{#1}}\xspace}
\newcommand{\sqnorm}[1]{\left\| #1 \right\|^2}
\newcommand{\Exp}[1]{\mathbb{E}\!\left[ #1 \right]}
\definecolor{myblue}{rgb}{0,0.0,0.8}
\begin{document}

\title{Convergence Analysis of the ProbAbilistic Gradient Estimator Algorithm for Weakly Convex Finite-Sum Optimization}


\author{Laurent Condat and  Peter Richt\'{a}rik}

\institute{Laurent Condat and Peter Richt\'{a}rik \at
             King Abdullah University of Science and Technology (KAUST) \\
Thuwal, Kingdom of Saudi Arabia\\
              first.last@kaust.edu.sa
}

\date{Accepted: July 5th, 2026}

\maketitle

\begin{abstract}
The ProbAbilistic Gradient Estimator algorithm (PAGE), a stochastic algorithm introduced by Li et al.\ in 2021, was designed to find stationary points for the average of smooth nonconvex functions. In this work, we study PAGE within the broad framework of $\tau$-weakly convex functions, 
providing a continuous interpolation between the general nonconvex $L$-smooth regime  ($\tau=L$)  and the convex regime ($\tau=0$).  We establish new convergence rates for PAGE, showing that its complexity improves as $\tau$ decreases.
\end{abstract}

\keywords{Weakly convex functions  \and finite-sum minimization \and stochastic algorithm \and complexity } 
\subclass{65K05 \and  90C15 \and 90C26 \and 90C06}

\section{Introduction}

Finite-sum optimization, which consists of minimizing the sum of a large number of functions, arises in a wide range of applications. A central example is 
empirical risk minimization, a fundamental 
model in machine learning and data science \cite{sra11,shai_book}.
Formally, let $\mathcal{X}$ be a finite-dimensional real Hilbert space. Given a number  $n\geq 1$ of smooth functions $f_i:\mathcal{X}\rightarrow \mathbb{R}$, $i\in[n]\coloneqq \{1,\ldots,n\}$, 
 the finite-sum optimization problem is
\begin{equation}
\min_{x\in\mathcal{X}} \,f(x)\coloneqq \frac{1}{n}\sum_{i=1}^n f_i(x).\label{eqp1}
\end{equation}
When the problem is nonconvex, we cannot expect to find a minimizer of $f$,
and the goal is instead to find a stationary point, where $\nabla f$ is zero. 
 A straightforward approach is Gradient Descent (GD), but it is often impractical: computing the full gradient $\nabla f(x)$ requires evaluating all $n$ individual gradients  $\nabla f_i(x)$, which becomes prohibitively expensive when $n$ is large.
 Stochastic algorithms overcome this challenge by evaluating only a small number of randomly chosen component gradients 
per iteration, offering a far more scalable alternative.

We make the following assumptions on the functions.
\begin{itemize}
\item There exists $L>0$ such that $f_i$ is $L$-smooth for every $i\in[n]$; that is, $f_i$ is differentiable on $\mathcal{X}$ and for every $(x,y)\in\mathcal{X}^2$, $\|\nabla f_i(x)-\nabla f_i(y)\|\leq L \|x-y\|$.
\item There exists $\tau\in [0,L]$ such that $f_i$ is $\tau$-weakly convex for every $i\in[n]$; that is, $f_i+\frac{\tau}{2} \|\cdot\|^2$ is convex \cite{nur73}.
\item $f$ is lower bounded: $f^\star \coloneqq \inf f > -\infty$. 
\end{itemize}
Note that every $L$-smooth function is $L$-weakly convex (Lemma~\ref{lemm1}). Furthermore, a twice continuously differentiable function is $\tau$-weakly convex if the minimal eigenvalue of its Hessian is uniformly bounded below by $-\tau$. Thus, the weak convexity
assumption is automatically satisfied, but our  analysis will reveal that convergence improves
as $\tau$ decreases from $L$ (general nonconvex regime) to 0 (convex regime). Weakly convex functions appear in a broad range of applications  \cite{dav19,sun19}.

In addition, we establish linear convergence when 
$f$ satisfies the 
Polyak--{\L}ojasiewicz (P{\L}) condition with constant $\mu>0$, or in short is $\mu$-P{\L} \cite{kar16}:
 \begin{equation*}
\|\nabla f(x)\|^2\geq 2\mu\big(f(x)-f^\star\big),\ \mbox{for every }x\in\mathcal{X}.
\end{equation*}
In this case, $\mu\leq L$ and we define the condition number $\kappa\coloneqq \frac{L}{\mu}\geq 1$.  
Note that if $f$ is $\mu$-strongly convex, i.e., $f-\frac{\mu}{2}\|\cdot\|^2$ is convex, then it is $\mu$-P{\L} \cite{kar16}. 
But the reverse need not hold. For example, the function 
$(x_1,x_2)\in\mathbb{R}^2\mapsto \frac{1}{2}x_1^2 $ is 1-P{\L} but not strongly convex, illustrating that the P{\L} condition is a strictly weaker requirement.
Crucially, the P{\L} condition often holds in nonconvex problems of practical interest, including low-rank matrix recovery \cite{bho16}, phase retrieval \cite{sun16}, and deep learning \cite{kaw16}. In these cases, it guarantees the absence of spurious local minima and underpins fast convergence.

Our assumptions are mild. In particular, we do not assume that a minimizer of $f$ exists. For example, the logistic loss  $x\in\mathbb{R}\mapsto \log(1+e^x)$,  widely used in machine learning, is convex, $\frac{1}{4}$-smooth, and bounded below by $f^\star=0$, yet it has no minimizer. Moreover, $f$ may be $\mu$-P{\L} for some $\mu>0$,
even if the individual functions $f_i$ are merely weakly convex.\medskip

	\begin{figure}[t]
	\begin{algorithm}[H] 
		\caption{\algn{PAGE}}\label{alg1}
		\begin{algorithmic}
			\STATE  \textbf{input:} initial estimate $x^0$, initial gradient estimate $g^0$, stepsize $\gamma>0$, probability $p\in (0,1]$.
			\FOR{$t=0, 1, \ldots$}
			\STATE $x^{t+1} \coloneqq x^t - \gamma g^t$
			\STATE flip a coin $\theta^t = (1$ with probability $p$, $0$ otherwise)
			\IF{$\theta^t=0$}
			\STATE pick $i^t \in [n]$ uniformly at random
			\STATE $g^{t+1} \coloneqq g^t +\nabla f_{i^t}(x^{t+1})-\nabla f_{i^t}(x^{t})$
			\ELSE
			\STATE $g^{t+1} \coloneqq \nabla f(x^{t+1})$
			\ENDIF 
			\ENDFOR
		\end{algorithmic}
	\end{algorithm}
	\end{figure}

To solve the problem \eqref{eqp1}, we consider the ProbAbilistic Gradient Estimator algorithm (\algn{PAGE})~\cite{li21},  shown as Algorithm~\ref{alg1}. The method was also proposed independently under the name Loopless-SARAH (\algn{L2S})~\cite{li20a}, as it is a single-loop version of \algn{SARAH} \cite{ngu17}. 
 At each iteration $t\geq 0$, \algn{PAGE} performs a standard GD step
  with small probability $p>0$;  otherwise, it selects an index $i^t$ uniformly at random and evaluates 
$\nabla f_{i^t}$ at two different points to construct a biased gradient estimate for the next update.
 The key advantage of \algn{PAGE} is its drastic reduction in gradient evaluations compared to deterministic \algn{GD},
which iterates $x^{t+1}\coloneqq x^t-\gamma \nabla f(x^t)$. Each GD step requires evaluating all $n$ gradients $\nabla f_i(x^t)$ to compute $\nabla f(x^t)=\frac{1}{n}\sum_{i=1}^n \nabla f_i(x^t)$. By contrast, one iteration of \algn{PAGE} calls on average $pn+2(1-p)$ gradients.  For the typical choice $p=\frac{1}{n}$, this is about 3, making each \algn{PAGE} iteration roughly 
$\frac{n}{3}$ times faster  than a GD step.
\algn{PAGE} reduces to \algn{GD} 
when $p=1$ or $n=1$, with the initialization $g^0 = \nabla f(x^0)$.\medskip

Our contributions are twofold. First, we develop a unified Lyapunov-based analysis of \algn{PAGE} within the $\tau$-weakly convex framework. Second, we establish new sublinear and linear convergence rates, with complexities that improve 
as $\tau$ decreases. 
In particular, we recover the previously known optimal complexities in the general nonconvex regime and our complexities in the convex regime are sharper compared to prior results. 

\subsection{Existing Results}\label{secer}

\algn{PAGE} is highly efficient for nonconvex problems ($\tau=L$). 
Its iteration complexity to reach a 
stationary point $\tilde{x}$ with $\Exp{\sqnorm{\nabla f(\tilde{x})}}\leq \epsilon$, for any $\epsilon>0$, is $\mathcal{O}\big(L\sqrt{n}\epsilon^{-1} + n\big)$, using $\gamma \propto \frac{1}{L\sqrt{n}}$, $p\propto \frac{1}{n}$, $g^0=\nabla f(x^0)$~\cite[Corollary 4]{li20a}\cite[Theorem 1]{li21}. 
The same complexity holds in terms of gradient evaluations, 
and it is optimal \cite{li21}.

Furthermore, if $f$ is $\mu$-P{\L} for some $\mu>0$, 
\algn{PAGE} converges linearly: the complexity to reach a point $\tilde{x}$ with $\Exp{f(\tilde{x})-f^\star}\leq \epsilon$ is $\mathcal{O}\big((\sqrt{n}\kappa + n)\log \epsilon^{-1}\big)$, with same parameter choice $\gamma \propto \frac{1}{L\sqrt{n}}$, $p\propto \frac{1}{n}$, $g^0=\nabla f(x^0)$ \cite[Theorem 5]{li21}. This complexity is also optimal \cite{bai24}.

These results were further refined by Tyurin at al.~\cite{tyu23}, who accounted for different smoothness constants $L_i$ of the functions $f_i$ and exploited their structural homogeneity.\medskip

In the convex setting ($\tau=0$), existing analyses of \algn{PAGE} allow a stepsize  $\gamma$ of order  $\frac{1}{L}$ instead of $\frac{1}{L\sqrt{n}}$, but at the cost of worse 
complexity.  With $\gamma\propto\frac{1}{L}$ and $p=\frac{1}{n}$, 
the complexity  to reach a point $\tilde{x}$ with $\Exp{\sqnorm{\nabla f(\tilde{x})}}\leq \epsilon$ is $\mathcal{O}(Ln\epsilon^{-1} + n)$ \cite[Corollary 2]{li20a}. In the strongly convex setting, with $\gamma < \frac{2}{3L}$ and $p=\frac{1}{\kappa^2}$, 
a modified version of \algn{PAGE} called \algn{L2S-SC} 
has complexity $\mathcal{O}\big((\kappa^2 + n)\log \epsilon^{-1}\big)$  \cite[Corollary 5]{li20a}.    
Since strong convexity implies the P{\L} condition, 
 this can be compared with the complexity $\mathcal{O}\big((\sqrt{n}\kappa + n)\log \epsilon^{-1}\big)$. Depending on the relative values of $n$ and $\kappa$, $\kappa^2$ may be better or worse than $\sqrt{n}\kappa$. 
A notable drawback of \algn{L2S-SC} is that it requires knowledge of $\mu$  in order to set $p$.
Finally, if every component $f_i$ is $\mu$-strongly convex, a much stronger assumption than $f$ being strongly convex, the same authors showed that,  with $\gamma < \frac{2}{3L}$ and $p=\frac{1}{\kappa}$, \algn{L2S-SC} 
achieves $\mathcal{O}\big((\kappa +n)\log \epsilon^{-1}\big)$ \cite[Corollary 6]{li20a}.  Our new analysis improves upon all of these results in the convex setting.

\section{New Convergence Results}

Our analysis of \algn{PAGE} relies on a new Lyapunov function that captures both objective decrease and variance
reduction. Leveraging this formulation, we first establish linear convergence under the P{\L} condition. We then provide sublinear
rates in the general weakly convex setting. Our Lyapunov function $\Psi^t$ is defined for every iteration index $t\geq 0$ as
\begin{equation}
\Psi^t \coloneqq f(x^t)-f^\star -b\gamma \sqnorm{\nabla f(x^t)} + \frac{a \gamma}{p} \sqnorm{g^t-\nabla f(x^t)} + \frac{b \gamma}{p} \sqnorm{g^t},\label{eqly}
\end{equation}
for some constants $a>0$ and $b\in[ 0, \frac{1}{2\gamma L}]$. Since $f$ is $L$-smooth, we have 
$f(x^t)-f^\star \geq  \frac{1}{2L}\sqnorm{\nabla f(x^t)}$
\cite[eq. 3.5]{bub15}, 
so that the condition $b\gamma  \leq \frac{1}{2L}$ guarantees that $\Psi^t$ is a valid Lyapunov function: we have $\Psi^t\geq 0$, and $\Psi^t \rightarrow 0$ implies $\sqnorm{g^t}\rightarrow 0$, $\sqnorm{\nabla f(x^t)}\rightarrow 0$, and
$f(x^t)-f^\star \rightarrow 0$.

The novelty of this Lyapunov function resides in the term $b \gamma\big(\frac{1}{p} \sqnorm{g^t}-\sqnorm{\nabla f(x^t)}\big)$. When expanding its conditional expectation, a negative inner product arises, which can be exploited to cancel positive terms appearing elsewhere, 
via the following lemma.
\begin{lemma}\label{lemm2}
Let $g:\mathcal{X}\rightarrow \mathbb{R}$ be an $L$-smooth and $\tau$-weakly convex function for some $L>0$ and $\tau\in [0,L]$. Then, for every $(x,y)\in\mathcal{X}^2$, 
\begin{align*}
\sqnorm{\nabla g(x)- \nabla g(y)}&\leq (L-\tau)\langle \nabla g(x) - \nabla g(y),x-y\rangle + L\tau \sqnorm{x-y}.
\end{align*}
\end{lemma}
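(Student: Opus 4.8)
The plan is to reduce the claim to the cocoercivity of the gradient of a suitably shifted convex function. Set $\phi \coloneqq g + \frac{\tau}{2}\|\cdot\|^2$, so that $\nabla\phi(x) = \nabla g(x) + \tau x$ for every $x\in\mathcal{X}$. Two facts about $\phi$ are needed. First, $\phi$ is convex, directly by the definition of $\tau$-weak convexity. Second, $\frac{L+\tau}{2}\|\cdot\|^2 - \phi = \frac{L}{2}\|\cdot\|^2 - g$ is also convex: since $\nabla g$ is $L$-Lipschitz, the map $x\mapsto Lx - \nabla g(x)$ is monotone (by Cauchy–Schwarz), which is equivalent to convexity of $\frac{L}{2}\|\cdot\|^2 - g$. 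Thus $\phi$ is a convex function with $(L+\tau)$-Lipschitz gradient, so by the Baillon–Haddad theorem \citep{bau17} its gradient is $\tfrac{1}{L+\tau}$-cocoercive:
\begin{equation*}
\langle \nabla\phi(x) - \nabla\phi(y),\, x-y\rangle \;\geq\; \tfrac{1}{L+\tau}\,\sqnorm{\nabla\phi(x) - \nabla\phi(y)}, \qquad (x,y)\in\mathcal{X}^2 .
\end{equation*}

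Then I would substitute $\nabla\phi(x) - \nabla\phi(y) = \big(\nabla g(x) - \nabla g(y)\big) + \tau(x-y)$ into this inequality. Abbreviating $v \coloneqq \nabla g(x) - \nabla g(y)$ and $u \coloneqq x-y$, it reads $\langle v,u\rangle + \tau\sqnorm{u} \geq \tfrac{1}{L+\tau}\sqnorm{v + \tau u}$. Multiplying by $L+\tau>0$, expanding $\sqnorm{v + \tau u} = \sqnorm{v} + 2\tau\langle v,u\rangle + \tau^2\sqnorm{u}$, and cancelling the $\tau^2\sqnorm{u}$ terms appearing on both sides leaves exactly $(L-\tau)\langle v,u\rangle + L\tau\sqnorm{u} \geq \sqnorm{v}$, which is the claim.

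The only step requiring any insight is the choice of the shift $\frac{\tau}{2}\|\cdot\|^2$ together with the observation that the resulting $\phi$ is simultaneously convex (from weak convexity) and $(L+\tau)$-smooth (from $L$-smoothness of $g$); everything after that is expanding a square. As consistency checks, the inequality reduces to the standard cocoercivity bound $\sqnorm{v}\leq L\langle v,u\rangle$ when $\tau=0$ (then $\phi=g$ is convex and $L$-smooth) and to the plain Lipschitz bound $\sqnorm{v}\leq L^2\sqnorm{u}$ when $\tau=L$, consistent with Lemma~\ref{lemm1}. If one prefers to avoid invoking Baillon–Haddad by name, the same cocoercivity inequality can be obtained from scratch by adding the standard lower bound $\phi(y)\geq \phi(x)+\langle\nabla\phi(x),y-x\rangle+\tfrac{1}{2(L+\tau)}\sqnorm{\nabla\phi(x)-\nabla\phi(y)}$ (valid for convex functions with $(L+\tau)$-Lipschitz gradient, itself a consequence of the descent lemma applied to $z\mapsto\phi(z)-\langle\nabla\phi(x),z\rangle$) to its counterpart with $x$ and $y$ interchanged, after which the same algebra concludes.
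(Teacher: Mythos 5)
Your proof is correct and essentially identical to the paper's: both apply the Baillon--Haddad theorem to the convex, $(L+\tau)$-smooth function $g+\frac{\tau}{2}\|\cdot\|^2$ and then expand the square. The only (harmless) difference is that you spell out why the shifted function is $(L+\tau)$-smooth, which the paper takes as immediate.
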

For  $\tau=0$ and $\tau=L$, this inequality reduces to the $\frac{1}{L}$-cocoercivity and $L$-Lipschitzness of the gradient, respectively.\medskip

Our first result establishes the linear convergence of \algn{PAGE} under 
the P{\L} condition.

\begin{theorem}[linear convergence]\label{theo1}
Suppose that $f$ is $\mu$-P{\L} for some $\mu>0$. 
In \algn{PAGE} with Lyapunov function \eqref{eqly}, suppose that
\begin{equation*}
\gamma \leq 
\frac{1}{L\left(1+\sqrt{ \frac{4\tau}{L+\tau}}\sqrt{  \frac{1-p}{p}}\right)}, 
\end{equation*}
with the additional requirement $\gamma<\frac{1}{L}$ if $\tau=0$, 
\begin{equation*}
a= 1-\frac{\gamma (L-\tau)}{2}\in \left( \frac{1}{2},1\right], \quad b= \frac{\gamma (L-\tau)}{2}\in \left[0, \frac{1}{2}\right).
\end{equation*}
Then $b\gamma  < \frac{1}{2L}$, and 
\algn{PAGE}
converges linearly:  for every $t\geq 0$, 
\begin{equation*}
\Exp{\Psi^{t}}\leq \rho^t \Psi^0,
\end{equation*}
where 
\begin{equation*}
\rho\coloneqq \max\left(1-(1-2b)\gamma\mu,1-p\left(1-\frac{1}{2a}\right)\right)<1.
\end{equation*}
Also, $\Psi^t\rightarrow 0$ 
almost surely.
\end{theorem}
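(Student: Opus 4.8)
The plan is to prove the one-step contraction $\Exp{\Psi^{t+1}\mid\mathcal{F}^t}\le\rho\,\Psi^t$, where $\mathcal{F}^t\coloneqq\sigma(\theta^0,i^0,\ldots,\theta^{t-1},i^{t-1})$ is the natural filtration, and then conclude by the tower rule and a supermartingale argument. The structural point is that, given $\mathcal{F}^t$, the iterate $x^{t+1}=x^t-\gamma g^t$, and hence $f(x^{t+1})$ and $\nabla f(x^{t+1})$, are deterministic, so the only randomness left in $\Psi^{t+1}$ enters through $g^{t+1}$, i.e.\ through the coin $\theta^t$ and the sampled index $i^t$.

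First I would expand the two stochastic quadratic terms. With $v_i\coloneqq\nabla f_i(x^{t+1})-\nabla f_i(x^t)$ and $\bar v\coloneqq\frac1n\sum_i v_i=\nabla f(x^{t+1})-\nabla f(x^t)$, a case split on $\theta^t$ together with $\mathbb{E}_{i^t}[v_{i^t}]=\bar v$ and $\mathbb{E}_{i^t}\sqnorm{v_{i^t}-\bar v}=\frac1n\sum_i\sqnorm{v_i}-\sqnorm{\bar v}$ gives exact expressions for $\Exp{\sqnorm{g^{t+1}-\nabla f(x^{t+1})}\mid\mathcal{F}^t}$ and $\Exp{\sqnorm{g^{t+1}}\mid\mathcal{F}^t}$; in particular the $b\gamma\sqnorm{\nabla f(x^{t+1})}$ produced by the latter cancels the $-b\gamma\sqnorm{\nabla f(x^{t+1})}$ sitting in $\Psi^{t+1}$. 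Next I would bound $f(x^{t+1})-f^\star$ via $L$-smoothness of $f$, and bound $\frac1n\sum_i\sqnorm{v_i}\le(L-\tau)\langle\bar v,x^{t+1}-x^t\rangle+L\tau\sqnorm{x^{t+1}-x^t}$ by applying Lemma~\ref{lemm2} to each $f_i$ and averaging. After substituting $x^{t+1}-x^t=-\gamma g^t$ everywhere and expanding $\langle\nabla f(x^t),g^t\rangle=\frac12\sqnorm{\nabla f(x^t)}+\frac12\sqnorm{g^t}-\frac12\sqnorm{g^t-\nabla f(x^t)}$, the right-hand side becomes a linear combination of $f(x^t)-f^\star$, $\sqnorm{\nabla f(x^t)}$, $\sqnorm{g^t-\nabla f(x^t)}$, $\sqnorm{g^t}$, $\sqnorm{\bar v}$, and $\langle\bar v,g^t\rangle$. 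The role of the prescribed constants is that two exact cancellations occur: since $a+b=1$, the coefficients of $\frac1n\sum_i\sqnorm{v_i}$ coming from the $a$- and $b$-weighted terms combine to $\frac{\gamma(1-p)}{p}$; and since $2b=\gamma(L-\tau)$, the $\langle\bar v,g^t\rangle$ contributions from $\Exp{\sqnorm{g^{t+1}}\mid\mathcal{F}^t}$ and from Lemma~\ref{lemm2} cancel outright. Discarding the harmless $-\frac{a\gamma(1-p)}{p}\sqnorm{\bar v}\le0$, one arrives at $\Exp{\Psi^{t+1}\mid\mathcal{F}^t}\le\big(f(x^t)-f^\star\big)-\frac\gamma2\sqnorm{\nabla f(x^t)}+c_R\sqnorm{g^t-\nabla f(x^t)}+c_S\sqnorm{g^t}$ with explicit $c_R=\frac\gamma2+\frac{a\gamma(1-p)}{p}$ and $c_S=-\frac\gamma2+\frac{L\gamma^2}{2}+\frac{b\gamma(1-p)}{p}+\frac{L\tau\gamma^3(1-p)}{p}$.

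To finish, I would write $-\frac\gamma2\sqnorm{\nabla f(x^t)}=-b\gamma\sqnorm{\nabla f(x^t)}-\gamma(\tfrac12-b)\sqnorm{\nabla f(x^t)}$ and apply the P{\L} inequality to the last piece, turning it into $-(1-2b)\gamma\mu\big(f(x^t)-f^\star\big)$. It then suffices to compare coefficients against those of $\rho\Psi^t$ term by term (all the variables are nonnegative): the $f(x^t)-f^\star$ comparison needs $\rho\ge1-(1-2b)\gamma\mu$, the $\sqnorm{\nabla f(x^t)}$ comparison needs only $\rho\le1$, the $\sqnorm{g^t-\nabla f(x^t)}$ comparison needs $c_R\le\rho\frac{a\gamma}{p}$, i.e.\ $\rho\ge1-p(1-\frac1{2a})$, and the $\sqnorm{g^t}$ comparison needs $c_S\le\rho\frac{b\gamma}{p}$. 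The first and third are exactly the two branches of $\rho$; the last is the main obstacle. Since $\rho\ge1-p(1-\frac1{2a})$, it suffices to show $c_S\le\big(1-p(1-\frac1{2a})\big)\frac{b\gamma}{p}$, which after plugging in $a=1-b$ and $b=\frac{\gamma(L-\tau)}{2}$ reduces to an algebraic inequality in $\gamma$ that the peculiar stepsize bound $\gamma\le\big(L(1+\sqrt{4\tau/(L+\tau)}\,\sqrt{(1-p)/p})\big)^{-1}$ is tailored to satisfy; separately one checks $b\gamma<\frac1{2L}$ from the bound (with the extra requirement $\gamma<1/L$ at $\tau=0$ needed to keep $b\gamma<\frac1{2L}$ and $\rho<1$ strict). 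As sanity checks, at $\tau=L$ this collapses to $L\gamma+2(L\gamma)^2\frac{1-p}{p}\le1$, consistent with the known $\mathcal{O}\big(1/(L\sqrt n)\big)$ stepsize for $p\propto1/n$, and at $\tau=0$ it holds whenever $\gamma<1/L$.

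Once $\Exp{\Psi^{t+1}\mid\mathcal{F}^t}\le\rho\,\Psi^t$ is established with $\rho<1$, the tower rule gives $\Exp{\Psi^t}\le\rho^t\Psi^0$; since $\sum_{t\ge0}\Exp{\Psi^t}<\infty$ and $\Psi^t\ge0$, we get $\sum_t\Psi^t<\infty$ and hence $\Psi^t\to0$ almost surely (equivalently, $\Psi^t\ge0$ is a supermartingale, so it converges a.s., and its limit has zero expectation by Fatou).
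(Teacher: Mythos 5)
Your proposal follows essentially the same route as the paper: the same variance decomposition of the two stochastic terms, the same use of Lemma~\ref{lemm2} averaged over the $f_i$, the same cancellations forced by $a+b=1$ and $2b=\gamma(L-\tau)$, and the same reduction of the $\sqnorm{g^t}$ coefficient to the condition $L\gamma+2L\tau\gamma^2\frac{1-p}{p}\leq 1$, which the stated stepsize bound indeed guarantees (the paper verifies this via a cited lemma; your $\tau=L$ and $\tau=0$ checks are correct). The only step you leave implicit is that explicit verification, and your summability argument for almost sure convergence is a harmless substitute for the paper's supermartingale citation.
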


\begin{corollary}[complexity]\label{corc1}
Under the assumptions of Theorem~\ref{theo1}, suppose that
\begin{equation*}
\gamma = \frac{\eta}{L\left(1+\sqrt{ \frac{4\tau}{L+\tau}}\sqrt{  \frac{1-p}{p}}\right)},
\end{equation*}
for some constant $\eta\in(0,1)$.
Then the number of iterations to reach $\epsilon$-accuracy, i.e., $\Exp{\Psi^{t}}\leq \epsilon$, is
\begin{equation*}
\mathcal{O}\left(\left(\kappa +\kappa\sqrt{ \frac{\tau}{Lp}} +\frac{1}{p} \right)\log\left( \frac{\Psi^0}{\epsilon}\right)\right).
\end{equation*}
Since each iteration requires $np+2(1-p)$ gradient calls on average, the complexity in terms of gradient evaluations is
\begin{equation*}
\mathcal{O}\left(\left(\kappa +\kappa pn+\kappa\sqrt{ \frac{\tau}{Lp}} +\kappa\sqrt{ \frac{\tau}{L}}n\sqrt{p} 
+\frac{1}{p} +n\right)\log\left( \frac{\Psi^0}{\epsilon}\right)\right).
\end{equation*}
With the  choice  $p \propto \frac{1}{n}$, this simplifies to
\begin{equation*}
\mathcal{O}\left(\left(\kappa +\kappa\sqrt{ \frac{\tau n}{L}} 
 +n\right)\log\left( \frac{\Psi^0}{\epsilon}\right)\right).
\end{equation*}
\end{corollary}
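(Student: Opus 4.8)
The plan is to turn the linear rate $\Exp{\Psi^t}\le\rho^t\Psi^0$ supplied by Theorem~\ref{theo1} into an iteration count, multiply by the expected per-iteration gradient cost, and finally specialize $p\propto 1/n$. Since $\rho<1$, the estimate $\rho^t\le e^{-t(1-\rho)}$ (from $\log\rho\le\rho-1$) shows that $\Exp{\Psi^t}\le\epsilon$ as soon as $t\ge\frac{1}{1-\rho}\log\frac{\Psi^0}{\epsilon}$, so it suffices to take $t=\big\lceil\frac{1}{1-\rho}\log\frac{\Psi^0}{\epsilon}\big\rceil$. The whole corollary thus reduces to upper-bounding $\frac{1}{1-\rho}$, where $1-\rho=\min\big((1-2b)\gamma\mu,\ p(1-\tfrac{1}{2a})\big)$, hence $\frac{1}{1-\rho}=\max\big(\frac{1}{(1-2b)\gamma\mu},\ \frac{1}{p(1-1/(2a))}\big)$.

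Next I would show that $1-2b$ and $1-\frac{1}{2a}$ are bounded away from $0$ by a constant depending only on $\eta$. Because $b=\frac{\gamma(L-\tau)}{2}$ and $a=1-b$, we have $1-2b=2a-1=1-\gamma(L-\tau)$ and $1-\frac{1}{2a}=\frac{2a-1}{2a}$, so both quantities are controlled by $2a-1$. Since the denominator $1+\sqrt{\tfrac{4\tau}{L+\tau}}\sqrt{\tfrac{1-p}{p}}$ in the stepsize formula is $\ge 1$, we get $\gamma L\le\eta$, whence $\gamma(L-\tau)\le\gamma L\le\eta$, giving $1-2b\ge 1-\eta>0$ and $2a\le 2$. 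Therefore $\frac{1}{(1-2b)\gamma\mu}\le\frac{1}{(1-\eta)\mu\gamma}$ and $\frac{1}{p(1-1/(2a))}\le\frac{2}{(1-\eta)p}=\mathcal{O}(1/p)$.

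Substituting the stepsize into $\frac{1}{\mu\gamma}$ gives $\frac{1}{\mu\gamma}=\frac{\kappa}{\eta}\big(1+\sqrt{\tfrac{4\tau}{L+\tau}}\sqrt{\tfrac{1-p}{p}}\big)$, and bounding $\sqrt{\tfrac{4\tau}{L+\tau}}\sqrt{\tfrac{1-p}{p}}\le 2\sqrt{\tfrac{\tau}{Lp}}$ (using $L+\tau\ge L$ and $1-p\le 1$) yields $\frac{1}{(1-2b)\gamma\mu}=\mathcal{O}\big(\kappa+\kappa\sqrt{\tfrac{\tau}{Lp}}\big)$, so $\frac{1}{1-\rho}=\mathcal{O}\big(\kappa+\kappa\sqrt{\tfrac{\tau}{Lp}}+\tfrac1p\big)$; multiplying by $\log(\Psi^0/\epsilon)$ gives the first display. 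For the gradient complexity I would multiply by the expected number of gradient calls per iteration, $np+2(1-p)=\mathcal{O}(np+1)$, and distribute: $np\cdot\kappa=\kappa pn$, $np\cdot\kappa\sqrt{\tfrac{\tau}{Lp}}=\kappa\sqrt{\tfrac{\tau}{L}}\,n\sqrt p$, $np\cdot\tfrac1p=n$, together with the $\mathcal{O}(1)$-prefactor copies $\kappa$, $\kappa\sqrt{\tfrac{\tau}{Lp}}$, $\tfrac1p$; collecting terms reproduces the second display. Finally, putting $p=c/n$ for a constant $c$ collapses $\kappa pn$ and the lone $\kappa$ to $\mathcal{O}(\kappa)$, $\tfrac1p$ to $\mathcal{O}(n)$, and both $\kappa\sqrt{\tfrac{\tau}{Lp}}$ and $\kappa\sqrt{\tfrac{\tau}{L}}n\sqrt p$ to $\mathcal{O}\big(\kappa\sqrt{\tfrac{\tau n}{L}}\big)$, which is the last display.

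The only delicate point — the ``main obstacle,'' though it is mild — is ensuring that $1-2b$ and $\frac{2a-1}{2a}$ stay bounded away from $0$ uniformly in $\tau$ and $p$; this is precisely where the strict inequality $\eta<1$ is needed, via $\gamma L\le\eta$ (itself a consequence of the stepsize denominator being $\ge 1$). Everything else is bookkeeping with $\mathcal{O}(\cdot)$, and one should note that the separate requirement $\gamma<1/L$ when $\tau=0$ is automatically met here since then the stepsize formula already gives $\gamma=\eta/L<1/L$.
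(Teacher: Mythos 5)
Your proposal is correct and is exactly the standard derivation the paper leaves implicit (the corollary is stated without proof): convert the contraction factor $\rho$ into an iteration count via $1/(1-\rho)$, check that $1-2b\ge 1-\eta$ and $1-\frac{1}{2a}\ge\frac{1-\eta}{2}$ using $\gamma L\le\eta<1$, substitute the stepsize, multiply by the expected per-iteration gradient cost, and set $p\propto 1/n$. All the individual bounds ($\sqrt{4\tau/(L+\tau)}\sqrt{(1-p)/p}\le 2\sqrt{\tau/(Lp)}$, the term-by-term distribution of the $\mathcal{O}(np+1)$ factor, and the collapse under $p\propto 1/n$) check out, and your remark about $\gamma<1/L$ being automatic when $\tau=0$ is also right.
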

When $\tau=L$, we recover 
the optimal complexity stated in Section~\ref{secer}. For $\tau<L$, our results are new. In particular, in the convex case $\tau=0$, 
with $\gamma\propto \frac{1}{L}$ and $p \propto \frac{1}{n}$, the complexity  of \algn{PAGE}  is 
\begin{equation*}
\mathcal{O}\left((\kappa 
 +n)\log\left( \frac{\Psi^0}{\epsilon}\right)\right).
\end{equation*}
When the components $f_i$ are convex ($\tau=0$) and $f$ is $\mu$-strongly convex for some $\mu>0$,
there exist  algorithms with complexity $\widetilde{\mathcal{O}}\big((\sqrt{\kappa n} +n)\log \epsilon^{-1}\big)$ \cite{all23,kov202}, where the tilde hides logarithmic factors. 
This complexity is known to be optimal \cite{han24}.  The assumption that $f$ is  $\mu$-P{\L} is strictly weaker than $\mu$-strong convexity. In particular, when minimizing a single function (equivalently, Problem \eqref{eqp1} with $n=1$), the $\mathcal{O}(\kappa\log \epsilon^{-1})$ complexity achieved by standard \algn{GD} is optimal and cannot be improved \cite{yue23}. We now establish that the $\widetilde{\mathcal{O}}\big((\kappa +n)\log \epsilon^{-1}\big)$  complexity  achieved by \algn{PAGE} is optimal:

\begin{theorem}[Optimality of \algn{PAGE} in the convex $\mu$-P{\L} regime ]\label{theoo}
Suppose $\tau=0$ and $f$ is $\mu$-P{\L} for some $\mu>0$. 
A lower bound on the number of gradient evaluations is 
 \begin{equation*}
\widetilde{\Omega}\left((\kappa 
 +n)\log(\epsilon^{-1})\right).
\end{equation*}
Therefore, the complexity 
\begin{equation*}
\widetilde{\mathcal{O}}\left((\kappa 
 +n)\log(\epsilon^{-1})\right)
\end{equation*}
of  \algn{PAGE}  established in Corollary \ref{corc1} is optimal.
\end{theorem}

\begin{proof}Assume $\tau=0$ (convex components) and that $f$ is $\mu$-P{\L}. We combine lower bounds from two worst-case scenarios. First, since $\mu$-strong convexity implies the $\mu$-P{\L} condition, the lower bound $\widetilde{\Omega}\big((n + \sqrt{n\kappa})\log \epsilon^{-1} \big)$ \cite{han24} applies to our setting. Second, the lower bound $\widetilde{\Omega}(\kappa\log \epsilon^{-1})$ for minimizing a single convex $\mu$-P{\L} function \cite{yue23} naturally extends to the $n$-component setting by choosing identical functions ($f_1 = \dots = f_n = f$). Taking the maximum of these two bounds yields a global lower bound of
$\widetilde{\Omega}\big((\kappa + n + \sqrt{n\kappa})\log \epsilon^{-1} \big) = \widetilde{\Omega}\big((\kappa + n)\log \epsilon^{-1}\big)$. This matches the complexity of \algn{PAGE} up to logarithmic factors, establishing its optimality. \qed\end{proof}

Our  second main result establishes the sublinear convergence of \algn{PAGE} in the general weakly convex case.

\begin{theorem}[sublinear convergence]\label{theo2}
In \algn{PAGE} with Lyapunov function \eqref{eqly}, suppose that
\begin{equation*}
\gamma \leq 
\frac{1}{L\left(1+\sqrt{ \frac{2\tau}{L+\tau}}\sqrt{  \frac{1-p}{p}}\right)}, \quad
a= \frac{1}{2}, \quad b= \frac{\gamma (L-\tau)}{4-2\gamma(L-\tau)},
\end{equation*}
with the additional requirement $\gamma<\frac{1}{L}$ if $\tau=0$. 
Then \algn{PAGE} converges sublinearly:  for every $T\geq 0$, define $\tilde{x}^T\coloneqq x^t$ for some $t$ chosen uniformly at random in $\{0,\ldots,T\}$. Then 
\begin{equation*}
 \Exp{\sqnorm{\nabla f(\tilde{x}^T)}}\leq \frac{1}{T+1}\frac{2\Psi^0 }{\gamma-\gamma^2(L-\tau)}.
\end{equation*}
 Also, $\sqnorm{\nabla f(x^t)}\rightarrow 0$ 
almost surely.
\end{theorem}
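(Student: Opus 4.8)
The plan is to prove the one-step drift inequality
$\Exp{\Psi^{t+1}\mid\mathcal{F}^t}\leq \Psi^t-\tfrac{\gamma-\gamma^2(L-\tau)}{2}\sqnorm{\nabla f(x^t)}$,
where $\mathcal{F}^t$ is the $\sigma$-algebra generated by all randomness up to the start of iteration $t$, so that $x^t,g^t$ and hence $x^{t+1}=x^t-\gamma g^t$ are $\mathcal{F}^t$-measurable while $(\theta^t,i^t)$ is the only fresh randomness at step $t$. Given this inequality, I would take total expectations, sum over $t=0,\dots,T$, telescope, and use $\Psi^{T+1}\geq 0$ (the hypotheses also force $b\gamma<\tfrac1{2L}$, so $\Psi^t\geq 0$ exactly as noted after \eqref{eqly}) to get $\sum_{t=0}^T\Exp{\sqnorm{\nabla f(x^t)}}\leq \tfrac{2\Psi^0}{\gamma-\gamma^2(L-\tau)}$; dividing by $T+1$ gives the stated bound since $\Exp{\sqnorm{\nabla f(\tilde{x}^T)}}=\tfrac1{T+1}\sum_{t=0}^T\Exp{\sqnorm{\nabla f(x^t)}}$ by the definition of $\tilde{x}^T$. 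The almost-sure statement follows from the same drift inequality by the Robbins--Siegmund lemma: $(\Psi^t)$ is a nonnegative supermartingale with $\sum_t\sqnorm{\nabla f(x^t)}<\infty$ almost surely, hence $\nabla f(x^t)\to 0$ almost surely.

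To establish the drift inequality I would expand $\Exp{\Psi^{t+1}\mid\mathcal{F}^t}$ term by term. Since $f$ is $L$-smooth (as an average of $L$-smooth functions), the descent lemma gives $f(x^{t+1})\leq f(x^t)-\gamma\langle\nabla f(x^t),g^t\rangle+\tfrac{L\gamma^2}{2}\sqnorm{g^t}$, and I would rewrite $-\gamma\langle\nabla f(x^t),g^t\rangle=-\tfrac\gamma2\sqnorm{\nabla f(x^t)}-\tfrac\gamma2\sqnorm{g^t}+\tfrac\gamma2\sqnorm{g^t-\nabla f(x^t)}$ via the polarization identity. For the last two terms of $\Psi^{t+1}$, conditioning first on $\theta^t$ and then on $i^t$, and using $\Exp{\nabla f_{i^t}(x^{t+1})-\nabla f_{i^t}(x^t)\mid\mathcal{F}^t}=\nabla f(x^{t+1})-\nabla f(x^t)$, yields, with $S^t:=\tfrac1n\sum_{i=1}^n\sqnorm{\nabla f_i(x^{t+1})-\nabla f_i(x^t)}$,
\[
\Exp{\sqnorm{g^{t+1}-\nabla f(x^{t+1})}\mid\mathcal{F}^t}=(1-p)\big(\sqnorm{g^t-\nabla f(x^t)}+S^t-\sqnorm{\nabla f(x^{t+1})-\nabla f(x^t)}\big)
\]
and
\[
\Exp{\sqnorm{g^{t+1}}\mid\mathcal{F}^t}=p\,\sqnorm{\nabla f(x^{t+1})}+(1-p)\big(\sqnorm{g^t}+2\langle g^t,\nabla f(x^{t+1})-\nabla f(x^t)\rangle+S^t\big).
\]
In particular the contribution $\tfrac{b\gamma}{p}\cdot p\,\sqnorm{\nabla f(x^{t+1})}$ cancels the term $-b\gamma\sqnorm{\nabla f(x^{t+1})}$ of $\Psi^{t+1}$.

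The crucial step is to control $S^t$ by applying Lemma~\ref{lemm2} to each $f_i$ at the pair $(x^{t+1},x^t)$ and averaging: using $x^{t+1}-x^t=-\gamma g^t$, this gives $S^t\leq -(L-\tau)\gamma\langle\nabla f(x^{t+1})-\nabla f(x^t),g^t\rangle+L\tau\gamma^2\sqnorm{g^t}$. Substituting this bound ($S^t$ always enters with a nonnegative coefficient) and collecting, $\Exp{\Psi^{t+1}\mid\mathcal{F}^t}-\Psi^t$ becomes a linear combination of $\sqnorm{\nabla f(x^t)}$, $\sqnorm{g^t}$, $\sqnorm{g^t-\nabla f(x^t)}$, the cross term $\langle\nabla f(x^{t+1})-\nabla f(x^t),g^t\rangle$, and $-\sqnorm{\nabla f(x^{t+1})-\nabla f(x^t)}$, the last with a nonpositive coefficient. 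I would then verify that the chosen parameters are tuned precisely to this expression: $a=\tfrac12$ zeroes the coefficient of $\sqnorm{g^t-\nabla f(x^t)}$; $b=\tfrac{\gamma(L-\tau)}{4-2\gamma(L-\tau)}$, equivalently $2b=(a+b)\gamma(L-\tau)$, zeroes the coefficient of the cross term; the stepsize bound makes the coefficient of $\sqnorm{g^t}$ nonpositive --- after clearing the positive factor $2-\gamma(L-\tau)$ this is the quadratic-in-$\gamma$ inequality $L\gamma+L\gamma^2\big(\tfrac{\tau(1-p)}{p}-\tfrac{L-\tau}{2}\big)\leq 1$, which one checks holds throughout the admissible stepsize range; and the surviving coefficient of $\sqnorm{\nabla f(x^t)}$ is $-\tfrac\gamma2(1-2b)=-\tfrac{\gamma(1-\gamma(L-\tau))}{2-\gamma(L-\tau)}\leq -\tfrac{\gamma-\gamma^2(L-\tau)}{2}$. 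Dropping the three nonpositive contributions yields the drift inequality.

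The main obstacle is the bookkeeping of that last step: one must carry every cross term through the expansion without slip and confirm that these exact values of $a$, $b$ and the exact stepsize threshold produce the claimed cancellations and signs, including the degenerate endpoints $\tau=L$ (where $b=0$ and Lemma~\ref{lemm2} collapses to $L$-Lipschitzness of the $\nabla f_i$) and $\tau=0$ (where the extra hypothesis $\gamma<\tfrac1L$ is exactly what keeps $b\gamma<\tfrac1{2L}$, hence $\Psi^t\geq 0$). The remaining ingredients --- the descent lemma, the conditional-expectation identities for $g^{t+1}$, and the telescoping --- are routine.
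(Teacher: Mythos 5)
Your proposal is correct and follows essentially the same route as the paper: the same Lyapunov expansion via the descent lemma and the two conditional-expectation identities, the same use of Lemma~\ref{lemm2} on the averaged term $S^t$, the same choices $a=\frac12$ and $2b=(a+b)\gamma(L-\tau)$ to kill the variance and cross-term coefficients, and the same telescoping and supermartingale argument. The only blemish is a dropped term in your quadratic for the $\sqnorm{g^t}$ coefficient (after clearing $2-\gamma(L-\tau)$ it should read $L\gamma+L\gamma^2\big(\tfrac{\tau(1-p)}{p}-\tfrac{L-\tau}{2}\big)+\tfrac{\gamma(L-\tau)}{2}\leq 1$), but the inequality still holds under the stated stepsize bound, which the paper verifies via the cleaner chain $a+b=\tfrac{1}{2-\gamma(L-\tau)}\leq\tfrac{L}{L+\tau}$ followed by the standard quadratic-stepsize lemma.
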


We now analyze the complexity required  to achieve $\Exp{\sqnorm{\nabla f(\tilde{x}^T)}}\leq \epsilon$, for any $\epsilon>0$. 
It is standard practice to express the complexity in terms of the initial gap
$\Delta_0\coloneqq f(x^0)-f^\star$, rather than
\begin{equation*}
\Psi^0 = \Delta_0-b\gamma \sqnorm{\nabla f(x^0)} + \frac{a \gamma}{p} \sqnorm{g^0-\nabla f(x^0)} + \frac{b \gamma}{p} \sqnorm{g^0}.
\end{equation*}
Thus, it remains to choose $g^0$  appropriately, using the bound
$\sqnorm{\nabla f(x^0)}\leq 2L \Delta_0$. 
We first  consider the case of a large stepsize $\gamma$ and arbitrary initialization $g^0$.

\begin{corollary}[complexity]\label{corc3}
Under the assumptions of Theorem~\ref{theo2}, suppose that
\begin{equation*}
\gamma = \frac{\eta}{L\left(1+\sqrt{ \frac{2\tau}{L+\tau}}\sqrt{  \frac{1-p}{p}}\right)}
\end{equation*}
for some constant $\eta\in(0,1)$, and
$\sqnorm{g^0}=\mathcal{O}(\sqnorm{\nabla f(x^0)})$, for example, $g^0=0$. Then 
\begin{equation*}
 \Exp{\sqnorm{\nabla f(\tilde{x}^T)}}=\mathcal{O}\left( \frac{\Psi^0 }{T\gamma}\right)=\mathcal{O}\left(\frac{\Delta_0 L }{Tp}\right).
\end{equation*}
Thus, if $p \geq \frac{1}{n}$,  the complexity of \algn{PAGE} is 
$\mathcal{O}(\Delta_0 Ln \epsilon^{-1})$ gradient evaluations.
\end{corollary}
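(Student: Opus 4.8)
The plan is to plug the prescribed step size $\gamma$ and initialization into the bound of Theorem~\ref{theo2} and simplify, using three elementary estimates: that the denominator $\gamma-\gamma^2(L-\tau)$ is a fixed fraction of $\gamma$, that $\Psi^0$ is controlled by $\Delta_0$, and that $1/\gamma=\mathcal{O}(L/p)$.

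First I would observe that, since $\tau\le L$, we have $\sqrt{\tfrac{2\tau}{L+\tau}}\le 1$, hence $\gamma\le\tfrac{\eta}{L}$ and therefore $\gamma(L-\tau)\le\gamma L\le\eta<1$. Consequently $\gamma-\gamma^2(L-\tau)=\gamma\big(1-\gamma(L-\tau)\big)\ge(1-\eta)\gamma$, so Theorem~\ref{theo2} already gives $\Exp{\sqnorm{\nabla f(\tilde{x}^T)}}\le\tfrac{2\Psi^0}{(1-\eta)(T+1)\gamma}=\mathcal{O}(\Psi^0/(T\gamma))$.

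Next I would bound $\Psi^0$ by $\Delta_0$. Dropping the nonpositive term $-b\gamma\sqnorm{\nabla f(x^0)}$ in \eqref{eqly}, using $a=\tfrac12$ and $b\le\tfrac{\gamma(L-\tau)}{2}\le\tfrac12$, and expanding $\sqnorm{g^0-\nabla f(x^0)}\le 2\sqnorm{g^0}+2\sqnorm{\nabla f(x^0)}$, I obtain $\Psi^0=\Delta_0+\mathcal{O}\big(\tfrac{\gamma}{p}(\sqnorm{g^0}+\sqnorm{\nabla f(x^0)})\big)$. The hypothesis $\sqnorm{g^0}=\mathcal{O}(\sqnorm{\nabla f(x^0)})$ together with the smoothness inequality $\sqnorm{\nabla f(x^0)}\le 2L\Delta_0$ then yields $\Psi^0=\mathcal{O}\big((1+\tfrac{\gamma L}{p})\Delta_0\big)$. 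Dividing by $T\gamma$ gives $\mathcal{O}\big(\tfrac{\Delta_0}{T\gamma}+\tfrac{\Delta_0 L}{Tp}\big)$, and since $\tfrac1\gamma=\tfrac{L}{\eta}\big(1+\sqrt{\tfrac{2\tau}{L+\tau}}\sqrt{\tfrac{1-p}{p}}\big)\le\tfrac{L}{\eta}\big(1+\tfrac1{\sqrt p}\big)\le\tfrac{2L}{\eta p}$ for $p\in(0,1]$, the first term is also $\mathcal{O}(\tfrac{\Delta_0 L}{Tp})$; hence $\Exp{\sqnorm{\nabla f(\tilde{x}^T)}}=\mathcal{O}(\Delta_0 L/(Tp))$.

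Finally, solving $\tfrac{\Delta_0 L}{Tp}\le\epsilon$ for $T$ shows that $\mathcal{O}(\Delta_0 L/(p\epsilon))$ iterations suffice; multiplying by the average per-iteration cost $np+2(1-p)\le np+2$ and using that $np\ge1$ whenever $p\ge\tfrac1n$, so $np+2=\mathcal{O}(np)$, collapses the total to $\mathcal{O}(\Delta_0 L n/\epsilon)$ gradient evaluations, with no extra initialization cost since $g^0=0$ is admissible. I do not expect a serious obstacle; the only delicate point is the $\Psi^0$ estimate, where one must keep the factor $\gamma$ in $\mathcal{O}((1+\gamma L/p)\Delta_0)$ rather than bounding $\gamma L\le1$ prematurely, since it is precisely the cancellation $(\gamma L/p)/\gamma=L/p$ that produces the claimed rate.
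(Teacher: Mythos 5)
Your proposal is correct and follows exactly the route the paper intends (the corollary is not proved in detail there, but the surrounding text indicates precisely this argument: bound $\gamma-\gamma^2(L-\tau)\geq(1-\eta)\gamma$, control $\Psi^0$ via $\sqnorm{\nabla f(x^0)}\leq 2L\Delta_0$ and the hypothesis on $g^0$, and use $1/\gamma=\mathcal{O}(L/p)$). All the estimates check out, including the per-iteration cost $np+2(1-p)=\mathcal{O}(np)$ when $p\geq\frac{1}{n}$ and the absence of an initialization cost for $g^0=0$.
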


This result is new. In the convex case $\tau=0$, the complexity $\mathcal{O}(\Delta_0 Ln \epsilon^{-1})$ improves upon the previously known $\mathcal{O}(\Delta_0 Ln\epsilon^{-1} + n)$ \cite[Corollary 2]{li20a}, thanks to the flexibility of initializing $g^0$  arbitrarily.

To obtain a better complexity, note that in $\Psi^0$, $b$ is proportional to $\gamma$, and can therefore be made small by choosing 
$\gamma$ small. In contrast,  $a=\frac{1}{2}$ is fixed. Hence, the term   $\frac{a \gamma}{p} \sqnorm{g^0-\nabla f(x^0)}$  must be eliminated by setting  $g^0=\nabla f(x^0)$, which generally requires  $n$ gradient evaluations. 

\begin{corollary}[complexity with $g^0=\nabla f(x^0)$]\label{corc4}
Under the assumptions of Theorem~\ref{theo2}, suppose that
$g^0=\nabla f(x^0)$. Then 
\begin{equation*}
 \Exp{\sqnorm{\nabla f(\tilde{x}^T)}}=\mathcal{O}\left( \frac{\Psi^0 }{T\gamma}\right)=\mathcal{O}\left(\frac{\Delta_0 L }{T}
\left (\frac{1}{\gamma L}+\frac{\gamma (L-\tau)}{p}\right)
 \right).
 \end{equation*}
 In particular, with $p \geq \frac{1}{n}$ and $\gamma \propto\frac{1}{\sqrt{n}L}$, regardless of $\tau$, the iteration complexity and gradient evaluation complexity of \algn{PAGE} are
$\mathcal{O}(\Delta_0 L \sqrt{n} \epsilon^{-1})$ and $\mathcal{O}(\Delta_0 L \sqrt{n} \epsilon^{-1}+n)$, respectively.
\end{corollary}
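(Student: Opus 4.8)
The plan is to use the bound of Theorem~\ref{theo2} directly and reduce $\Psi^0$ to the standard quantity $\Delta_0$. First I would substitute $g^0=\nabla f(x^0)$ into the expression for $\Psi^0$ recalled above: since $a=\frac{1}{2}$ in Theorem~\ref{theo2}, this choice makes the variance term $\frac{a\gamma}{p}\sqnorm{g^0-\nabla f(x^0)}$ vanish, while the two remaining gradient-norm terms combine into
\begin{equation*}
\Psi^0=\Delta_0+b\gamma\,\frac{1-p}{p}\,\sqnorm{\nabla f(x^0)}.
\end{equation*}
Applying the $L$-smoothness/lower-boundedness inequality $\sqnorm{\nabla f(x^0)}\leq 2L\Delta_0$ and $1-p\leq 1$ then gives $\Psi^0\leq\Delta_0\bigl(1+\frac{2Lb\gamma}{p}\bigr)$.

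Next I would bound $b\gamma$. The stepsize bound in Theorem~\ref{theo2} forces $\gamma(L-\tau)\leq\gamma L\leq 1$, so the denominator $4-2\gamma(L-\tau)$ of $b=\frac{\gamma(L-\tau)}{4-2\gamma(L-\tau)}$ is at least $2$; hence $b\leq\frac{\gamma(L-\tau)}{2}$ and $b\gamma\leq\frac{\gamma^2(L-\tau)}{2}$. Plugging this in yields
\begin{equation*}
\Psi^0\leq\Delta_0\Bigl(1+(\gamma L)\,\frac{\gamma(L-\tau)}{p}\Bigr).
\end{equation*}
For the denominator appearing in Theorem~\ref{theo2}, I would write $\gamma-\gamma^2(L-\tau)=\gamma\bigl(1-\gamma(L-\tau)\bigr)$; treating $L$, $\tau$ (and any absolute constant in the stepsize) as fixed while $n$ and $\epsilon$ vary, $1-\gamma(L-\tau)$ stays bounded away from $0$ — indeed it tends to $1$ under the stated scaling $\gamma\propto\frac{1}{\sqrt{n}\,L}$ — so this denominator equals $\Theta(\gamma)$. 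Combining the two displays with the conclusion of Theorem~\ref{theo2} then gives $\Exp{\sqnorm{\nabla f(\tilde{x}^T)}}=\mathcal{O}\bigl(\frac{\Psi^0}{T\gamma}\bigr)=\mathcal{O}\bigl(\frac{\Delta_0}{T\gamma}+\frac{\Delta_0 L\gamma(L-\tau)}{Tp}\bigr)$, which is exactly $\mathcal{O}\bigl(\frac{\Delta_0 L}{T}\bigl(\frac{1}{\gamma L}+\frac{\gamma(L-\tau)}{p}\bigr)\bigr)$.

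It remains to specialize. With $p\geq\frac{1}{n}$ and $\gamma$ of order $\frac{1}{\sqrt{n}\,L}$, the term $\frac{1}{\gamma L}$ is of order $\sqrt{n}$, while $\frac{\gamma(L-\tau)}{p}\leq\frac{\gamma L}{p}\leq\gamma L n$ is also of order $\sqrt{n}$; moreover the stepsize constraint of Theorem~\ref{theo2} holds for every $\tau\in[0,L]$ since its right-hand side is always $\Omega\bigl(\frac{1}{L\sqrt{n}}\bigr)$ (the factor $\sqrt{2\tau/(L+\tau)}$ is bounded by $\sqrt{2}$ and $\sqrt{(1-p)/p}=\mathcal{O}(\sqrt{n})$; the extra requirement $\gamma<\frac{1}{L}$ for $\tau=0$ also holds once $n$ is large enough). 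Hence $\Exp{\sqnorm{\nabla f(\tilde{x}^T)}}=\mathcal{O}\bigl(\frac{\Delta_0 L\sqrt{n}}{T}\bigr)$, so reaching $\epsilon$-accuracy takes $T=\mathcal{O}(\Delta_0 L\sqrt{n}\,\epsilon^{-1})$ iterations. Finally, choosing $p$ of order $\frac{1}{n}$, each iteration costs on average $pn+2(1-p)=\mathcal{O}(1)$ gradient evaluations, and the one-time computation of $g^0=\nabla f(x^0)$ costs $n$ evaluations, for a total gradient complexity $\mathcal{O}(\Delta_0 L\sqrt{n}\,\epsilon^{-1}+n)$. The computation is routine; the only points needing a little care are legitimizing the first $\mathcal{O}(\cdot)$, i.e.\ ensuring $1-\gamma(L-\tau)$ stays bounded below (automatic under the stated scaling of $\gamma$), and, for the gradient-complexity claim (as opposed to the iteration-complexity claim, which holds for any $p\geq\frac{1}{n}$), pinning $p$ to order $\frac{1}{n}$ so that the expected per-iteration cost is $\mathcal{O}(1)$.
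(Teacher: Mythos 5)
Your proposal is correct and follows exactly the route the paper intends (the paper does not spell out a separate proof for this corollary, but sets it up via the discussion preceding Corollary~\ref{corc3}: substitute $g^0=\nabla f(x^0)$ into $\Psi^0$, use $\sqnorm{\nabla f(x^0)}\leq 2L\Delta_0$ and $b\gamma\leq\frac{\gamma^2(L-\tau)}{2}$, then specialize $\gamma$ and $p$). Your two side remarks — that the first $\mathcal{O}(\cdot)$ needs $1-\gamma(L-\tau)$ bounded away from $0$, and that the gradient-evaluation bound needs $p$ of order $\frac{1}{n}$ rather than merely $p\geq\frac{1}{n}$ — are accurate readings of implicit assumptions in the statement, not gaps in your argument.
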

This complexity, which is optimal in the general nonconvex setting (see Section~\ref{secer}), 
does not improve as $\tau$ decreases. This is because we express the worst-case complexity with respect to $\Delta^0$, not $\Psi^0$, and taking a larger stepsize $\gamma$ worsens the dependence of $\Psi^0$ on $\Delta^0$. There exist methods whose complexity improves as $\tau$ decreases, see Han et al.~\cite{han24} for a recent review.

\section{Conclusion}

We developed a unified Lyapunov analysis of the \algn{PAGE} stochastic algorithm in the broad framework of minimizing a sum of smooth, weakly convex functions. Our results establish both linear convergence under the Polyak--{\L}ojasiewicz condition and sublinear convergence in the general regime, yielding complexities that interpolate smoothly between the nonconvex and convex cases.
Specifically, our analysis yields the following two main results: First, when $f$ is $\mu$-P{\L}, the gradient evaluation complexity of \algn{PAGE} is
\begin{equation*}\widetilde{\mathcal{O}}\left(\left(\kappa +\kappa\sqrt{ \frac{\tau n}{L}} +n\right)\log (\epsilon^{-1})\right),\end{equation*}
which is optimal at both extreme cases $\tau=0$ and $\tau=L$.
Second, in the general weakly convex regime, the complexity is
\begin{equation*}\mathcal{O}\left((f(x^0)-f^\star) L \sqrt{n} \epsilon^{-1}+n\right),\end{equation*}
which is optimal in the nonconvex case ($\tau=L$).
Thus, our derived rates improve upon prior results in the convex setting while recovering known optimal bounds in the nonconvex setting. Ultimately, our study highlights the versatility and efficiency of \algn{PAGE}, without requiring restrictive assumptions.

\begin{acknowledgements}
This work was supported by funding from King Abdullah University of Science and Technology (KAUST): 

\noindent i) KAUST Baseline Research Scheme, 

\noindent ii) Center of Excellence for Generative AI (award no.\ 5940), 

\noindent iii) Competitive Research Grant (CRG) Program (award no.\ 6460), 

\noindent iv) SDAIA-KAUST Center of Excellence in Data Science and Artificial Intelligence (SDAIA-KAUST AI).
\end{acknowledgements}

{\small

\noindent\textbf{Data availability statement\ \ }No dataset has been used to support the findings of this paper, which is of theoretical nature.\medskip

\noindent\textbf{Competing Interests\ \ }The authors have no competing interests to declare that are relevant to the content of this article.
}

\appendix

\section{Proofs}

\begin{lemma}\label{lemm1}
Let $g:\mathcal{X}\rightarrow \mathbb{R}$ be an $L$-smooth function for some $L>0$. Then $g$ is $L$-weakly convex. 
\end{lemma}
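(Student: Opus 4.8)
The plan is to prove the statement directly from the definition: $g$ is $L$-weakly convex means precisely that $h \coloneqq g + \frac{L}{2}\|\cdot\|^2$ is convex, so it suffices to establish convexity of $h$. Since $g$ is differentiable on all of $\mathcal{X}$, so is $h$, with $\nabla h(x) = \nabla g(x) + L x$. For a differentiable function on a Hilbert space, convexity is equivalent to monotonicity of the gradient (see \citet{bau17}), so I would reduce the claim to showing that $\langle \nabla h(x) - \nabla h(y),\, x - y\rangle \geq 0$ for every $(x,y)\in\mathcal{X}^2$.

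The core computation is short. Expanding,
\[
\langle \nabla h(x) - \nabla h(y),\, x - y\rangle = \langle \nabla g(x) - \nabla g(y),\, x - y\rangle + L\|x-y\|^2 .
\]
Bounding the first term from below by Cauchy--Schwarz and then invoking the $L$-Lipschitz continuity of $\nabla g$,
\[
\langle \nabla g(x) - \nabla g(y),\, x - y\rangle \geq -\|\nabla g(x) - \nabla g(y)\|\,\|x-y\| \geq -L\|x-y\|^2 ,
\]
so the full expression is nonnegative, which proves that $h$ is convex and hence that $g$ is $L$-weakly convex.

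There is essentially no obstacle here; the only point needing a little care is citing the correct characterization of convexity of a differentiable function via gradient monotonicity, for which \citet{bau17} suffices. An alternative, self-contained route avoids this characterization: integrate $t \mapsto \langle \nabla g(y + t(x-y)),\, x - y\rangle$ over $[0,1]$ and use Lipschitzness of $\nabla g$ to obtain the ``reverse descent'' inequality $g(x) \geq g(y) + \langle \nabla g(y), x-y\rangle - \frac{L}{2}\|x-y\|^2$, then check that after adding $\frac{L}{2}\|\cdot\|^2$ this is exactly the first-order convexity condition for $h$. Either approach is routine, and I would present the gradient-monotonicity version for brevity.
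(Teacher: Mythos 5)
Your proof is correct and follows essentially the same route as the paper: both reduce the claim to monotonicity of $\nabla g + L\,\mathrm{Id}$ and invoke the equivalence between convexity of a differentiable function and monotonicity of its gradient from \citet{bau17}. The only cosmetic difference is that you lower-bound the inner product directly via Cauchy--Schwarz and Lipschitzness, whereas the paper expands a squared norm and thereby obtains the slightly stronger conclusion that $\nabla g + L\,\mathrm{Id}$ is $\frac{1}{2L}$-cocoercive, a refinement not needed for the lemma itself.
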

\begin{proof}
Let $(x,y)\in\mathcal{X}^2$. We have 
\begin{align*}
\sqnorm{\nabla g(x)+Lx - \nabla g(y)-Ly}&=\sqnorm{\nabla g(x) - \nabla g(y)}+\sqnorm{Lx-Ly}\\
&\quad+2L\langle \nabla g(x) - \nabla g(y),x-y\rangle\\
&\leq L^2 \sqnorm{x-y}+L^2 \sqnorm{x-y}+2L\langle \nabla g(x) - \nabla g(y),x-y\rangle \\
&= 2L\langle \nabla g(x) +Lx- \nabla g(y)-Ly,x-y\rangle.
\end{align*}
Thus,  $\nabla g + L\mathrm{Id}$ is  monotone (and even $\frac{1}{2L}$-cocoercive),
 where $\mathrm{Id}$ denotes the identity. Since $\nabla g + L\mathrm{Id}$ is the gradient of $g+\frac{L}{2}\|\cdot\|^2$, and a differentiable function is convex if and only if its gradient is monotone \cite[Proposition 17.7]{bau17}, $g+\frac{L}{2}\|\cdot\|^2$ is convex.
 \end{proof}

\subsection{Proof of Lemma~\ref{lemm2}}

Let $(x,y)\in\mathcal{X}^2$. Since $g+\frac{\tau}{2}\|\cdot\|^2$ is convex and $(L+\tau)$-smooth, its gradient is $\frac{1}{L+\tau}$-cocoercive according to the Baillon--Haddad theorem \cite[Corollary 18.17]{bau17}, so that
\begin{align*}
\sqnorm{\nabla g(x)+\tau x - \nabla g(y)-\tau y} &\leq (L+\tau)\langle \nabla g(x) +\tau x- \nabla g(y)-\tau y,x-y\rangle\\
&=(L+\tau)\langle \nabla g(x) - \nabla g(y),x-y\rangle +  (L+\tau)\tau \sqnorm{x-y}.
\end{align*}
Hence,
\begin{align*}
\sqnorm{\nabla g(x)- \nabla g(y)}&=\sqnorm{\nabla g(x)+\tau x- \nabla g(y)-\tau y}\!-\!2\tau
\langle \nabla g(x) - \nabla g(y),x-y\rangle 
\!-\!\tau^2\sqnorm{x-y}\\
&\leq (L-\tau)\langle \nabla g(x) - \nabla g(y),x-y\rangle + L\tau \sqnorm{x-y}.
\end{align*}\qed

\subsection{Proof of Theorem~\ref{theo1}}\label{secproof1}

For every $t\geq 0$, let $\mathcal{F}^t$ denote the $\sigma$-algebra generated by the random variables $x^0,g^0,\ldots, x^t,g^t$. For the moment, assume that $a$ and $b$ in \eqref{eqly} satisfy  $a>0$, 
$0\leq b\leq \frac{1}{2}$, $b\gamma  \leq \frac{1}{2L}$, without additional restrictions.

Let $t\geq 0$. We now derive upper bounds for the terms in $\Exp{\Psi^{t+1}\;|\;\mathcal{F}^t}$.

\noindent \textbf{First}, we have the descent property \cite[Lemma 4]{ric21}
\begin{align*}
f(x^{t+1}) -f^\star &\leq f(x^t)  -f^\star -\frac{\gamma}{2} \sqnorm{\nabla f(x^t)} +\frac{ \gamma }{2}\sqnorm{g^{t}-\nabla f(x^t)}+ \left(\frac{ L}{2}-\frac{1}{2\gamma}\right)\sqnorm{x^{t+1}-x^t}\\
&=f(x^t)  -f^\star -\frac{\gamma}{2} \sqnorm{\nabla f(x^t)} +\frac{ \gamma }{2}\sqnorm{g^{t}-\nabla f(x^t)}+ \left(\frac{ L\gamma^2}{2}-\frac{\gamma}{2}\right)\sqnorm{g^t}.
\end{align*}
\textbf{Second}, we have 
\begin{align*}
\Exp{\sqnorm{g^{t+1}-\nabla f(x^{t+1})}\;|\;\mathcal{F}^t} &= (1-p)\Exp{\sqnorm{g^{t+1}-\nabla f(x^{t+1})}\;|\;\theta^t=0,\mathcal{F}^t},
\end{align*}
and by decomposing 
the expected squared norm into its squared mean and variance, we obtain
\begin{align*}
&\Exp{\sqnorm{g^{t+1}-\nabla f(x^{t+1})}\;|\;\mathcal{F}^t} = (1-p)\sqnorm{g^{t}-\nabla f(x^t)}\\
&\qquad +\frac{1-p}{n}\sum_{i=1}^n\sqnorm{\nabla f_i(x^{t+1})-\nabla f_i(x^{t})-\nabla f(x^{t+1})+\nabla f(x^t)}.
\end{align*}
\textbf{Third}, we have
\begin{align*}
\Exp{\sqnorm{g^{t+1}}\;|\;\mathcal{F}^t} &= p \sqnorm{\nabla f(x^{t+1})}+ (1-p)\Exp{\sqnorm{g^{t+1}}\;|\;\theta^t=0,\mathcal{F}^t}\\
&=p \sqnorm{\nabla f(x^{t+1})}+ (1-p)\sqnorm{g^{t}+\nabla f(x^{t+1})-\nabla f(x^t)}\\
&\quad +\frac{1-p}{n}\sum_{i=1}^n\sqnorm{\nabla f_i(x^{t+1})-\nabla f_i(x^{t})-\nabla f(x^{t+1})+\nabla f(x^t)}.
\end{align*}
Using the fact that $g^t=-\frac{1}{\gamma}(x^{t+1}-x^t)$, we obtain
\begin{align*}
\sqnorm{g^{t}+\nabla f(x^{t+1})-\nabla f(x^t)} &= \sqnorm{g^{t}} + \sqnorm{\nabla f(x^{t+1})-\nabla f(x^t)} +2\!\left\langle g^{t},\nabla f(x^{t+1})-\nabla f(x^t)\right\rangle\\
&=\sqnorm{g^{t}} + \sqnorm{\nabla f(x^{t+1})-\nabla f(x^t)}\\
&\quad -\frac{2}{\gamma}\left\langle x^{t+1}-x^t,\nabla f(x^{t+1})-\nabla f(x^t)\right\rangle.
\end{align*}
The negative inner product $-\frac{2}{\gamma}\left\langle x^{t+1}-x^t,\nabla f(x^{t+1})-\nabla f(x^t)\right\rangle$ that appears is crucial, as it enables the cancellation of positive terms using Lemma~\ref{lemm2}.

Since $f=\frac{1}{n}\sum_{i=1}^n f_i$, we have
\begin{align*}
&\frac{1}{n}\sum_{i=1}^n\sqnorm{\nabla f_i(x^{t+1})-\nabla f_i(x^{t})-\nabla f(x^{t+1})+\nabla f(x^t)}\\
&\quad= \frac{1}{n}\sum_{i=1}^n\sqnorm{\nabla f_i(x^{t+1})-\nabla f_i(x^{t})} -\sqnorm{\nabla f(x^{t+1})-\nabla f(x^t)}.
\end{align*}
Therefore,
\begin{align*}
\Exp{\sqnorm{g^{t+1}}\;|\;\mathcal{F}^t} &=p \sqnorm{\nabla f(x^{t+1})}+ (1-p)\sqnorm{g^{t}}\\
&\quad -\frac{2(1-p)}{\gamma}\left\langle x^{t+1}-x^t,\nabla f(x^{t+1})-\nabla f(x^t)\right\rangle\\
 &\quad +\frac{1-p}{n}\sum_{i=1}^n\sqnorm{\nabla f_i(x^{t+1})-\nabla f_i(x^{t})}.
\end{align*}

Moreover, by Lemma~\ref{lemm2} and the smoothness and weak convexity of the functions $f_i$, we obtain
\begin{align*}
\frac{1}{n}\sum_{i=1}^n \sqnorm{\nabla f_i(x^{t+1})- \nabla f_i(x^t)}&\leq\frac{1}{n}\sum_{i=1}^n \Big( (L-\tau)\big\langle \nabla f_i(x^{t+1}) - \nabla f_i(x^t),x^{t+1}-x^t\big\rangle \\
&\quad\left.{}+ L\tau \sqnorm{x^{t+1}-x^t}\right)\\
&= (L-\tau)\big\langle \nabla f(x^{t+1}) - \nabla f(x^t),x^{t+1}-x^t\big\rangle \\
&\quad+ L\tau \sqnorm{x^{t+1}-x^t}.
\end{align*}
The inner products $(L-\tau)\big\langle \nabla f(x^{t+1}) - \nabla f(x^t),x^{t+1}-x^t\big\rangle$ will be canceled using the negative inner product $-\frac{2}{\gamma}\left\langle x^{t+1}-x^t,\nabla f(x^{t+1})-\nabla f(x^t)\right\rangle$ that appeared above, while the terms 
$\sqnorm{x^{t+1}-x^t}$ will be canceled by choosing $\gamma$ sufficiently small and using the negative term $(\frac{ L}{2}-\frac{1}{2\gamma})\sqnorm{x^{t+1}-x^t}$ arising in the descent property.

Thus, 
\begin{align*}
\Exp{\sqnorm{g^{t+1}-\nabla f(x^{t+1})}\;|\;\mathcal{F}^t} 
&\leq (1-p)\sqnorm{g^{t}-\nabla f(x^t)}\!+\!
\frac{1-p}{n}\sum_{i=1}^n \sqnorm{\nabla f_i(x^{t+1})- \nabla f_i(x^t)}\\
&\leq (1-p)\sqnorm{g^{t}-\nabla f(x^t)}+ (1-p)L\tau \sqnorm{x^{t+1}-x^t}\\
&\quad +(1-p)(L-\tau)  \left\langle \nabla f(x^{t+1}) - \nabla f(x^t),x^{t+1}-x^t\right\rangle
\end{align*}
and
\begin{align*}
\Exp{\sqnorm{g^{t+1}}\;|\;\mathcal{F}^t} 
&\leq p \sqnorm{\nabla f(x^{t+1})}+ (1-p)\sqnorm{g^{t}}\\
&\quad -\frac{2(1-p)}{\gamma}\left\langle \nabla f(x^{t+1})-\nabla f(x^t),x^{t+1}-x^t\right\rangle\\
 &\quad +(1-p)(L-\tau)\left\langle \nabla f(x^{t+1}) - \nabla f(x^t),x^{t+1}-x^t\right\rangle\\
&\quad +(1-p)L\tau \sqnorm{x^{t+1}-x^t}\\
 &= p\sqnorm{\nabla f(x^{t+1})}+ (1-p)\big(1+L\tau\gamma^2\big)
\sqnorm{g^{t}} \\
&\quad +(1-p)\left(L- \tau-\frac{2}{\gamma}\right) \left\langle \nabla f(x^{t+1}) - \nabla f(x^t),x^{t+1}-x^t\right\rangle .
\end{align*}
Hence, combining the inequalities for these terms, we obtain
\begin{align*}
\Exp{\Psi^{t+1}\;|\;\mathcal{F}^t}&\leq
f(x^t)  -f^\star -\frac{\gamma}{2} \sqnorm{\nabla f(x^t)} +\frac{ \gamma }{2}\sqnorm{g^{t}-\nabla f(x^t)}+ \left(\frac{ L\gamma^2}{2}-\frac{\gamma}{2}\right)\sqnorm{g^t} \\
&\quad -b\gamma\sqnorm{\nabla f(x^{t+1})}
+\frac{a\gamma}{p} (1-p)\sqnorm{g^{t}-\nabla f(x^t)}
+ a\gamma^3 L\tau \frac{1-p}{p}\sqnorm{g^t}\\
&\quad+a\gamma (L-\tau) \frac{1-p}{p}\left\langle \nabla f(x^{t+1}) - \nabla f(x^t),x^{t+1}-x^t\right\rangle+b\gamma\sqnorm{\nabla f(x^{t+1})}\notag\\
&\quad+\frac{b\gamma}{p}  (1-p)\big(1+L\tau\gamma^2\big)\sqnorm{g^{t}}\\
&\quad+ \big(b\gamma (L-\tau)-2b\big) \frac{1-p}{p}\left\langle \nabla f(x^{t+1}) - \nabla f(x^t),x^{t+1}-x^t\right\rangle\notag\\
&= f(x^t)  -f^\star  -\frac{\gamma}{2} \sqnorm{\nabla f(x^t)}+\frac{a\gamma}{p}\left(1-p\left(1-\frac{1 }{2a}\right)  \right)\sqnorm{g^{t}-\nabla f(x^t)}\notag\\
&\quad+\frac{b\gamma}{p}  (1-p)\sqnorm{g^{t}}
+\frac{\gamma}{2}\left(L\gamma-1+ 2(a+b)\gamma^2 L\tau \frac{1-p}{p}\right)\sqnorm{g^{t}}\notag\\
&\quad+ \big((a+b)\gamma (L-\tau)-2b\big) \frac{1-p}{p}\left\langle \nabla f(x^{t+1}) - \nabla f(x^t),x^{t+1}-x^t\right\rangle.
\end{align*}
To cancel the inner product, we set
\begin{equation}
b\coloneqq \frac{a\gamma (L-\tau)}{2-\gamma(L-\tau)},\label{eqbb}
\end{equation}
so that $(a+b)\gamma (L-\tau)-2b=0$. Note that $\gamma\leq \frac{1}{L}$ implies $2-\gamma(L-\tau)\geq 1$. Moreover,
a sufficient condition for $L\gamma-1+ 2(a+b)\gamma^2 L\tau \frac{1-p}{p}\leq 0$ is  \cite[Lemma 5]{ric21}
\begin{equation}
\gamma \leq \frac{1}{L+\sqrt{2(a+b)L\tau \frac{1-p}{p}}}.\label{eqlllo}
\end{equation}
To avoid the dependence of $b$ on $\gamma$, we derive from $\gamma\leq \frac{1}{L}$ that 
\begin{equation*}
b\leq  \frac{a (L-\tau)}{L+\tau},
\end{equation*}
so that $a+b\leq  \frac{2aL}{L+\tau}$. Therefore, a sufficient condition for \eqref{eqlllo} is
\begin{equation}
\gamma \leq \frac{1}{L+\sqrt{ \frac{4aL}{L+\tau}L\tau \frac{1-p}{p}}}=\frac{1}{L\left(1+\sqrt{ \frac{4a\tau}{L+\tau}}\sqrt{  \frac{1-p}{p}}\right)}.\label{eqlllo2}
\end{equation}
If we additionally assume $a\leq 1$,  the dependence on $a$ can be eliminated with the sufficient condition 
\begin{equation*}
\gamma \leq \frac{1}{L\left(1+\sqrt{ \frac{4\tau}{L+\tau}}\sqrt{  \frac{1-p}{p}}\right)},
\end{equation*}
as stated in the theorem.

Therefore, since the sufficient conditions above are supposed to hold, we have
\begin{align*}
\Exp{\Psi^{t+1}\;|\;\mathcal{F}^t}&\leq
 f(x^t)  -f^\star  -\frac{\gamma}{2} \sqnorm{\nabla f(x^t)}+\frac{a\gamma}{p}\left(1-p\left(1-\frac{1 }{2a}\right)  \right)\sqnorm{g^{t}-\nabla f(x^t)}\\
&\quad+\frac{b\gamma}{p}  (1-p)\sqnorm{g^{t}}\\
&= f(x^t)  -f^\star  -\gamma\left(\frac{1}{2}-b\right)\sqnorm{\nabla f(x^t)} -b\gamma \sqnorm{\nabla f(x^t)}\\
&\quad+\frac{a\gamma}{p}\left(1-p\left(1-\frac{1}{2a}\right)  \right)\sqnorm{g^{t}-\nabla f(x^t)}+\frac{b\gamma}{p}  (1-p)\sqnorm{g^{t}}.
\end{align*}
To obtain a contraction, we need $a>\frac{1}{2}$ and $b<\frac{1}{2}$. The choice 
\begin{equation*}
a= 1-\frac{\gamma (L-\tau)}{2}, \quad b= \frac{\gamma (L-\tau)}{2},
\end{equation*}
with the additional condition $\gamma<\frac{1}{L}$ if $\tau=0$, satisfies  these two inequalities, as well as \eqref{eqbb}, $a\leq 1$, and $b \gamma \leq \frac{1}{2L}$.

Finally, under the assumption that $f$ is $\mu$-P{\L}, we have
\begin{equation*}
\|\nabla f(x^t)\|^2\geq 2\mu\big(f(x^t)-f^\star\big).
\end{equation*}
Therefore,
\begin{align}
\Exp{\Psi^{t+1}\;|\;\mathcal{F}^t}&\leq  \big(1-(1-2b)\gamma\mu\big)\big(f(x^t)  -f^\star\big)  -b\gamma \sqnorm{\nabla f(x^t)}\notag\\
&\quad+\frac{a\gamma}{p}\left(1-p\left(1-\frac{1}{2a}\right)  \right)\sqnorm{g^{t}-\nabla f(x^t)}+\frac{b\gamma}{p}  (1-p)\sqnorm{g^{t}}\notag\\
&\leq  \big(1-(1-2b)\gamma\mu\big)\big(f(x^t)  -f^\star  -b\gamma \sqnorm{\nabla f(x^t)}\big)\notag\\
&\quad+\frac{a\gamma}{p}\left(1-p\left(1-\frac{1}{2a}\right)  \right)\sqnorm{g^{t}-\nabla f(x^t)}+\frac{b\gamma}{p}  (1-p)\sqnorm{g^{t}}\notag\\
&\leq \max\left(1-(1-2b)\gamma\mu,1-p\left(1-\frac{1}{2a}\right),1-p\right)\Psi^t\notag\\
&= \max\left(1-(1-2b)\gamma\mu,1-p\left(1-\frac{1}{2a}\right)\right)\Psi^t.\label{eqrec2b}
\end{align}
Using the tower rule of expectations, we can unroll the recursion in \eqref{eqrec2b} to obtain the unconditional expectation of $\Psi^{t+1}$. Moreover, using classical results on supermartingale convergence \cite[Proposition A.4.5]{ber15}, it follows from \eqref{eqrec2b} that $\Psi^t\rightarrow 0$ almost surely. \qed

\subsection{Proof of Theorem~\ref{theo2}}\label{secproof2}

We proceed as in Section~\ref{secproof1}, now with $a=\frac{1}{2}$ and, by \eqref{eqbb},
\begin{equation*}
b= \frac{\gamma (L-\tau)}{4-2\gamma(L-\tau)}< \frac{1}{2}.
\end{equation*}
 Resuming from the condition \eqref{eqlllo2}, which is supposed to hold, we have
 \begin{align}
\Exp{\Psi^{t+1}\;|\;\mathcal{F}^t}&\leq
 f(x^t)  -f^\star  -\frac{\gamma}{2} \sqnorm{\nabla f(x^t)}+\frac{a\gamma}{p}\sqnorm{g^{t}-\nabla f(x^t)}+\frac{b\gamma}{p}  (1-p)\sqnorm{g^{t}}\notag\\
&= f(x^t)  -f^\star  -\gamma\left(\frac{1}{2}-b\right)\sqnorm{\nabla f(x^t)} -b\gamma \sqnorm{\nabla f(x^t)}\notag\\
&\quad+\frac{a\gamma}{p}\sqnorm{g^{t}-\nabla f(x^t)}+\frac{b\gamma}{p}  (1-p)\sqnorm{g^{t}}\notag\\
&\leq \Psi^t -\gamma\left(\frac{1}{2}-b\right)\sqnorm{\nabla f(x^t)}.  \label{eqcva}
\end{align}
By telescoping the sum and applying the tower rule of expectations, we establish the summability of the squared gradient norms: for every $T\geq 0$,
\begin{equation*}
\gamma\left(\frac{1}{2}-b\right) \sum_{t=0}^T \Exp{\sqnorm{\nabla f(x^t)}}\leq \Psi^0 - \Exp{\Psi^{T+1}} \leq \Psi^0.
\end{equation*}
This implies that $\Exp{\sqnorm{\nabla f(x^t)}}\rightarrow 0$ as $t\rightarrow +\infty$. Also, \eqref{eqcva} implies that $\sqnorm{\nabla f(x^t)}\rightarrow 0$ almost surely.

Now,  for every $T\geq 0$, we define $\tilde{x}^T\coloneqq x^t$ for $t$ chosen uniformly at random in $\{0,\ldots,T\}$. Then,
\begin{align*}
 \Exp{\sqnorm{\nabla f(\tilde{x}^T)}}= \frac{1}{T+1} \sum_{t=0}^T \Exp{\sqnorm{\nabla f(x^t)}}&\leq \frac{1}{T+1}\frac{2\Psi^0}{\gamma (1-2b)}\leq\frac{1}{T+1}\frac{2\Psi^0 }{\gamma-\gamma^2(L-\tau)}.
\end{align*}\qed

\end{document}